\def\theequation{\thesection.\arabic{equation}}
\theoremstyle{definition}
\newtheorem{Def}{Definition}[section]
\theoremstyle{plain}
\newtheorem{Thm}{Theorem}[section]
\newtheorem{Prop}[Thm]{Proposition}
\newtheorem{rem}{Remark}
\newcommand{\R}{\mathbb{R}}
\newcommand{\C}{\mathbb{C}}
\newcommand{\Z}{\mathbb{Z}}
\newcommand{\N}{\mathbb{N}}
\newcommand{\id}{\textnormal{Id}}
\newcommand{\cl}{\textnormal{cl}}
\newcommand{\op}{\textnormal{Op}}
\newcommand{\wres}{\textnormal{wres}}
\newcommand{\res}{\textnormal{res}}
\renewcommand{\Re}{\mathfrak{Re}}
\def\SX{ {\mathcal{S}} }
\def\Dir{\slashed{D}}
\newcommand{\biindice}[3]%
{
\begin{array}[t]{c}
#1\\
{\scriptstyle #2}\\
{\scriptstyle #3}
\end{array}
}
\author{Ubertino Battisti}
\address{Dipartimento di Matematica, Università di Torino, Italy}
\email{ubertino.battisti@unito.it}
\author{Sandro Coriasco}
\address{Dipartimento di Matematica, Università di Torino, Italy}
\email{sandro.coriasco@unito.it}
\title[A Note on the Einstein-Hilbert action and Dirac operators on $\R^n$]{A Note on the Einstein-Hilbert action\\
and Dirac operators on $\R^n$}
\keywords{Wodzicki Residue, Einstein-Hilbert action, Dirac operator}
\subjclass[2000]{Primary: 58J40; Secondary: 58J42, 47A10, 47G30, 47L15}
\begin{document}

\begin{abstract}
We prove an extension to $\R^n$, endowed with a suitable metric, of the relation between the Einstein-Hilbert action and the 
Dirac operator which holds on closed spin manifolds.
By means of complex powers, we first define the regularised Wodzicki Residue for a class of operators globally defined on $\R^n$. The result is then obtained by using the properties of heat kernels and generalised Laplacians.
\end{abstract}

\maketitle

\section*{Introduction}
In 1984 M. Wodzicki \cite{WO84} introduced a trace on the algebra of classical pseudodifferential operators on a closed manifold $M$. A similar result was independently obtained by V. Guillemin \cite{GU85}, in order to give a \emph{soft} proof of Weyl formula. 
Wodzicki residue became then a standard tool in Non-commutative Geometry. In 1988, A. Connes \cite{CO88} proved that, for operator of order $-\dim(M)$, Dixmier Trace and Wodzicki residue are equivalent. Moreover, he conjectured that Wodzicki Residue could connect Dirac operators and Einstein-Hilbert actions on $M$. In 1995, D. Kastler \cite{KA95}, W. Kalau and M. Walze \cite{KW95} proved this conjecture. Namely, let $\Dir$ be the classical Atiyah-Singer operator defined on a closed spin manifold $M=(M,g)$ of even dimension $n\geq 4$. Then
\begin{equation}
\label{KKWTHM}
\wres(\Dir^{-n+2})= -\frac{(n-2) \, 2^{[\frac{n}{2} ]}}{\Gamma(\frac{n}{2}) (4\pi)^\frac{n}{2}}  \int_M \frac{1}{12}s(x)\,dx,
\end{equation}
where $s(x)$ is the scalar curvature and $dx$ the measure on $M$ induced by the Riemannian metric $g$
(see, e.g., \cite{AB02} for an overview about Wodzicki Residue and Non-commutative Geometry).
T. Ackermann \cite{AC96} gave a proof of \eqref{KKWTHM}, using the relationship between heat trace and $\zeta$-function and the properties of the second term in the asymptotic expansion of the heat trace of a generalised Laplacian.
Y. Wang \cite{WA06b}, \cite{WA06}, \cite{WA09} suggested an extension of the result to a class of manifolds with boundary.

In the last years, the definition of Wodzicki residue has been extended to other settings: manifolds with boundary \cite{FE96}, manifolds with conical singularities \cite{SC97}, \cite{LO02}, $SG$-calculus on $\R^n$ \cite{NI03}, anisotropic operators on $\R^n$ \cite{BN03}, and manifolds with cylindrical ends \cite{BC09}. In \cite{MSS06}, in order to study critical metrics on $\R^n$, it has been introduced the regularised trace of operators, which leads to the definition of regularised $\zeta$-function.
Wodzicki residue has been used also by R. Ponge \cite{PO08} to introduce \emph{lower dimensional volumes}. 

In this paper, we make use of $SG$-operators, a class of  pseudodifferential operators on $\R^n$ whose symbols $a(x,\xi)$, for fixed $\mu, m \in \R$ and all $x,\xi\in\R^n$, satisfy the estimates
\begin{equation}
\label{controllo}
|D_x^{\alpha} D_\xi^{\beta} a(x,\xi)|\leq C_{\alpha, \beta} (1+|x|)^{m-|\alpha|} (1+|\xi|)^{\mu-|\beta|}
\end{equation}
for suitable constants $C_{\alpha, \beta}\ge0$: the set of such symbols is denoted by $SG^{\mu, m}(\R^n)$.  If one deals with operators acting on sections of a vector bundle, the symbols are matrices whose entries must satisfy the inequalities \eqref{controllo}.
The corresponding operators can be defined via the usual left-quantisation
\[
A u(x)=\op(a)=\frac{1}{(2\pi)^{\frac{n}{2}}}\int e^{i x \xi} \, a(x,\xi) \, \hat{u} (\xi)\,d\xi,
\]
$u\in\SX(\R^n)$, and the set $\{\op(a)\mid a \in SG^{\mu, m}(\R^n)\}$ is denoted by $L^{\mu, m}(\R^n)$. Such operators form a graded algebra, that is $L^{\mu, m}(\R^n)\circ L^{\mu', m'}(\R^n)\subseteq L^{\mu+\mu', m+m'}(\R^n)$, map continuously $\SX(\R^n)$ to itself and can be extended as continuous operators from $\SX^\prime(\R^n)$ to itself. $L^{-\infty}(\R^n)= \bigcap_{\mu, m \in \R} L^{\mu, m}(\R^n)$,  the set of the so-called smoothing operators, coincides with the set of operators with kernel in $\SX(\R^{2n})$: these constitute the residual elements of the $SG$-calculus. 
A main tool, in particular, is the subclass of $SG$-classical operators, whose symbols, belonging to $SG_\cl^{\mu, m}(\R^n)
\subset SG^{\mu, m}(\R^n)$, admit a double asymptotic expansion in terms which are homogeneous w.r.t. the 
$\xi$-variable or the $x$-variable, respectively. More precisely, for $a \in SG_\cl^{\mu, m}(\R^n)$ we denote 
\renewcommand{\theenumi}{\roman{enumi}}
\begin{enumerate}
	\item by $a_{\mu-j, \cdot}$ the terms of order $(\mu-j,m)$, homogeneous w.r.t. the $\xi$-variable,
	such that, for a fixed $0$-excision function $\chi=\chi(\xi)$, $a\sim \sum_j \chi \, a_{\mu-j,\cdot}\!\!\!\mod SG^{-\infty,m}(\R^n)$;
	\item by $a_{\cdot, m-k}$ the terms of order $(\mu,m-k)$, homogeneous w.r.t. the $x$-variable,
	such that, for a fixed $0$-excision function $\omega\!=\!\omega(x)$,
	$a\sim \sum_k \omega \,a_{\cdot,m-k}\!\!\!\mod SG^{\mu,-\infty}(\R^n)$;
	\item by $a_{\mu-j, m-k}$ the terms of order $(\mu-j,m-k)$, homogeneous w.r.t. the $\xi$-variable
	and the $x$-variable, such that, for the same $0$-excision functions,
	$\chi\,a_{\mu-j,\cdot}\sim\sum_k \chi\,\omega\, a_{\mu-j,m-k}\mod SG^{\mu-j,-\infty}$
	and
	$\omega\,a_{\cdot,m-k,}\sim\sum_j \chi\, \omega\,a_{\mu-j,m-k}\mod SG^{-\infty,m-k}$.
\end{enumerate}
The set $\{\op(a)\mid a \in SG^{\mu, m}_\cl(\R^n)\}$ is denoted by $L^{\mu, m}_\cl(\R^n)$:
typical examples are differential operators with polynomial coefficients.
For details about the $SG$-calculus and its properties, see, e.g., \cite{BTh}, \cite{CO}, \cite{ES97}, \cite{MSS06}, \cite{PA72}, \cite{SC87},
and the references quoted therein.

Here we prove a \textit{regularised version} of \eqref{KKWTHM} on $\R^n$, $n\geq 4$, endowed with a suitable Riemannian metric $g$ and corresponding 
induced measure $dx$, 
see Sections \ref{sec:2} and \ref{sec:3} below. We consider a generalised positive Laplacian $\Delta= \nabla^* \nabla + \mathscr{K}$, defined on a Hermitian vector bundle $E$ with connection $\nabla$, where $\mathscr{K}$ is a symmetric endomorphism field. Moreover,  
in order to define and make use of its fractional powers, we assume that the spectrum $\sigma(\Delta)$ of $\Delta$ lies outside a sector of the complex plane, with at most the exception of the origin. Finally, we obtain
\begin{equation}
	\label{eq:wresgen}
	\widehat{\wres}(\Delta^{-\frac{n}{2}+1})= \frac{(n-2)}{\Gamma(\frac{n}{2}) (4 \pi)^\frac{n}{2}} \fint 
	\left[\frac{\textnormal{Rk}(E)}{6}s(x)- \textnormal{Trace}(\mathscr{K}_x)\right]dx,
\end{equation}
where $\widehat{\wres}(\cdot)$ is a generalised Wodzicki residue, defined for $SG$-operators of order $0$ w.r.t. the $x$-variable, and $\fint$ denotes the \textit{finite part integral}\footnote{A similar result was obtained in \cite{BTh} by direct evaluation.}. 

The paper is organised as follows. Sections \ref{sec:1} and \ref{sec:2} are devoted to illustrate the definitions of the finite part integral of classical symbols, and of regularised trace and $\zeta$-function for $SG$-operators, respectively.
In Section \ref{sec:3} we give the proof of \eqref{eq:wresgen} under the assumption that, if the origin belongs to $\sigma(\Delta)$, it is an isolated point of $\sigma(\Delta)$, and, finally, we obtain a regularised version of \eqref{KKWTHM}.

\section*{Acknowledgements}
The authors would like to thank R. Mazzeo, L. Rodino, and E. Schrohe, for useful suggestions and discussions.

\section{Finite part integral}
\label{sec:1}
The \emph{finite-part integral}, introduced in \cite{MSS06b}, gives a meaning to the integral of a classical symbol $a$, and coincides with the usual integral when $a\in L^1(\R^n)$. $dS$ denotes the usual measure on $|x|=1$, induced by the Euclidean metric on $\R^n$, while, in this Section, $dx$ denotes the standard Lebesgue measure on $\R^n$.

\begin{Def}
Let $a$ be an element of the classical H\"ormander symbol class $S^{m}_\cl(\R^n)$, that is, 
\begin{enumerate}
	\item $a\in C^\infty(\R^n)$ and $\forall x\in\R^n \; |D^\alpha a(x)|\le C_\alpha (1+|x|)^{m-|\alpha|}$;
	\item $a$ admits an asymptotic expansion in homogeneous terms $a_{m-j}$ of order $m-j$: 
	explicitly, for a fixed $0$-excision function
	$\omega$ and all $N\in\N$,
	\[
		a- \sum_{j=0}^{N-1}\omega \, a_{m-j} \in S^{m-N}(\R^n).
	\]
\end{enumerate}
Then:
\begin{itemize}
\item[-] if $m \in \Z$, set
\begin{align*}
\fint  a(x)\,dx&:= \lim_{\rho \to \infty}\left[ \int_{|x|\leq \rho}a(x) \, dx - \sum_{j=0}^{m-n} \int_{|x|\leq \rho} a_{m-j}(x) \, dx\right]
\\
&= \lim_{\rho \to \infty} \left[ \int_{|x|\leq \rho}a(x) \, dx- \sum_{j=0}^{m-n} \frac{\beta_j}{n+m-j}\rho^{n+m-j}-\beta_{n+m} \log \rho\right]
\end{align*}
where
\begin{equation}
\label{beta}
\beta_j:= \int_{|x|=1} a_{m-j} \, dS;
\end{equation}
\item[-] if $m \not \in \Z$, set
\begin{equation}
\fint  a(x)\,dx:= \lim_{\rho \to \infty}\left[ \int_{|x|\leq \rho}a(x)\, dx - \sum_{j=0}^{[m]-n-1} \int_{|x|\leq \rho} a_{m-j}(x)\, dx\right].
\end{equation}
\end{itemize}
\end{Def}
\noindent From the above Definition it is clear that if $a \in L^1(\R^n)$ the finite part integral is equivalent to the standard integral. If $m \notin \Z$ the finite part integral coincides with the Kontsevich-Vishik density \cite{KV94}, \cite{KV95}.
\begin{rem}
If one considers the radial compactification of $\R^n$ to $\mathbb{S}^n_+$, namely
\[
rc\colon \R^n \to S^n_+\colon x=(x_1, \ldots, x_n)\mapsto y=\left[\frac{x_1}{(1+ |x|^2)^{\frac{1}{2}}}, \ldots, \frac{x_n}{(1+ |x|^2)^{\frac{1}{2}}}, \frac{1}{(1+ |x|^2)^{\frac{1}{2}}}\right],
\]
and chooses $y_{n+1}$ as boundary defining function on $\mathbb{S}^n_+$, its composition with $rc$ coincides in the interior with $\frac{1}{(1+ |x|^2)^{\frac{1}{2}}}$, $x=rc^{-1}(y)\in\R^n$. Then
\[
\fint a(x)\,dx=\, {^R\hspace{-2mm} \int_{\mathbb{S}^n_+}a (rc^{-1}(y))\,dS(y)}
\]
where the right hand side 
is defined as the term of order $\epsilon^0$ in the asymptotic expansion of
\[
\int_{\mathbb{S}^n_+\cap\{y_{n+1}\geq \epsilon\}} a(rc^{-1}(y))\,dS(y),  \quad \epsilon \searrow 0.
\]
$\displaystyle{^R\hspace{-2mm}  \int_{\mathbb{S}^n_+}f\,dS}$
is called \emph{Renormalised integral}, see \cite{AL07} and the references quoted therein
for its precise definition, properties and applications.
\end{rem}

\section{Regularised trace and regularised $\zeta$-function}
\label{sec:2}
In the sequel we will often make reference to sectors of the complex plane with vertex at the origin, that is, subsets of
$\C$ given by $\Lambda=\{z\in\C\mid -\pi+\theta\le\arg(z)\le\pi-\theta\}$, $0<\theta<\pi$, as in the next picture.
\begin{center}
	\includegraphics{settore.1}
\end{center}

\noindent
The definition of $\Lambda$-elliptic operator is the standard one, here given for operators defined through matrix-valued symbols,
whose spectrum we denote by $\sigma(a(x,\xi))$:
\begin{Def}
\label{elldeb}
The operator $A \in L^{\mu,0}(\R^n)$ is  $\Lambda$-elliptic if there exists a constant $R > 0$ such that
\begin{equation}
\label{1ell}
\sigma(a(x,\xi)) \cap \Lambda =\emptyset\quad \forall |\xi|\geq R, \;\; \forall x \in \R^n
\end{equation}
and
\begin{equation}
\label{2ell}
(a(x,\xi)-\lambda)^{-1} \in SG^{-\mu, 0}(\R^n) \quad \forall  |\xi|\geq R,\;\; \forall x \in \R^n,\;\;\forall\lambda \in \Lambda.
\end{equation}
\end{Def}

\noindent It is well known that, if an operator $A$ is $\Lambda$-elliptic, we can build a \textit{weak} parametrix $B(\lambda)$ such that
\begin{equation}
\label{weak}
\begin{split}
&B(\lambda) \circ (A-\lambda I)= \id + R_1(\lambda),\\
&(A-\lambda I) \circ B(\lambda)=\id + R_2(\lambda), \quad R_1, R_2 \in L^{-\infty,0}(\R^n).
\end{split}
\end{equation}
Moreover 
\begin{equation}
\label{secto}
\begin{split}
&\lambda \,B(\lambda) \in L^{-\mu,0}(\R^n),\\
&\lambda^2 \,\big[(A-\lambda I)^{-1}- B(\lambda)\big] \in L^{-\infty, 0}(\R^n), \quad \forall \lambda \in \Lambda\setminus\{0\}.
\end{split}
\end{equation}

From now on, $\mu>0$ and $A$ is considered as an unbounded operator with dense domain
$D(A)=H^\mu(\R^n)\hookrightarrow L^2(\R^n) \to L^2(\R^n)$. 
To define the complex powers of a $\Lambda$-elliptic operator $A$, we assume that the following property holds for
its spectrum $\sigma(A)$:
\newcounter{tmpeq}
\setcounter{tmpeq}{\value{equation}}
\renewcommand{\theequation}{A\arabic{equation}}
\setcounter{equation}{0}
\begin{equation}
\label{sass}
\sigma(A)\cap \{\Lambda \setminus \{0\}\}=\emptyset \text{ and the origin is at most an isolated point of $\sigma(A)$}. 
\end{equation}
\renewcommand{\theequation}{\thesection.\arabic{equation}}
\setcounter{equation}{\value{tmpeq}}

\begin{Prop}
Let $A\in L^{\mu, 0}(\R^n)$, $\mu>0$, be a $\Lambda$-elliptic operator that satisfies Assumption \eqref{sass}. The complex power $A^z$, $\Re(z)<0$,
can be defined as
\begin{equation}
\label{exp}
A^z:= \frac{1}{2\pi i}\int_{\partial^+ \Lambda_\epsilon} \lambda^z (A-\lambda I)^{-1}d\lambda.
\end{equation}
where $\Lambda_\epsilon=\Lambda \cup \{z \in \C \mid |z|\leq \epsilon\}$, with $\epsilon>0$ chosen such that $\sigma(A) \cap \{\Lambda_\epsilon \setminus \{0\}\} = \emptyset$ and $\partial^+\Lambda_\epsilon$ is the (positively oriented) boundary of
$\Lambda_\epsilon$. 

\end{Prop}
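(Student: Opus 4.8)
The plan is to realise $A^z$ via the Seeley construction of complex powers, transplanted to the $SG$-calculus on $\R^n$, and to check that the contour integral \eqref{exp} converges in a suitable operator topology precisely because $\Re(z)<0$. First I would study the integrand on the contour $\partial^+\Lambda_\epsilon$. Its unbounded part lies inside $\Lambda$, where the weak parametrix $B(\lambda)$ of \eqref{weak} provided by $\Lambda$-ellipticity (Definition \ref{elldeb}) and the estimates \eqref{secto} apply; near the arc $|\lambda|=\epsilon$ (where $\lambda\notin\sigma(A)$ by the choice of $\epsilon$ and Assumption \eqref{sass}), the ellipticity in $\xi$ of $A-\lambda I$ for bounded $\lambda$ yields an analogous parametrix with error in $L^{-\infty,0}(\R^n)$. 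In either region, solving the first identity of \eqref{weak} as $(A-\lambda I)^{-1}=B(\lambda)-R_1(\lambda)(A-\lambda I)^{-1}$ and bootstrapping with the mapping properties of $R_1(\lambda)\in L^{-\infty,0}(\R^n)$, one obtains $(A-\lambda I)^{-1}\in L^{-\mu,0}(\R^n)$; by \eqref{secto} all seminorms of its symbol in $SG^{-\mu,0}(\R^n)$, and in particular its $\mathcal L(L^2)$-norm, are then $O(\norm{\lambda}^{-1})$ uniformly for $\lambda\in\partial^+\Lambda_\epsilon$.

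Next I would fix a branch of $\lambda\mapsto\lambda^z=e^{z\log\lambda}$, holomorphic on $\C\setminus\Lambda_\epsilon$, by placing the logarithmic cut inside $\Lambda_\epsilon$; on the contour it satisfies $|\lambda^z|\le C_z\,|\lambda|^{\Re(z)}$ with $C_z$ depending only on $z$ and on the sector. Split $\partial^+\Lambda_\epsilon$ into its compact part (the arc $|\lambda|=\epsilon$ together with bounded segments of the two rays) and the two unbounded rays: the former gives an honest $L^{-\mu,0}(\R^n)$-valued integral of a continuous integrand, while on the latter
\[
\big\|\lambda^z(A-\lambda I)^{-1}\big\|_{\mathcal L(L^2)}\le C\,C_z\,|\lambda|^{\Re(z)-1},\qquad \int_\epsilon^{\infty}r^{\Re(z)-1}\,dr<\infty\iff\Re(z)<0 .
\]
Hence \eqref{exp} converges absolutely in $\mathcal L(L^2(\R^n))$; repeating the estimates at the level of the parametrix symbols --- a parameter-dependent $SG$-calculus computation --- shows that the integral converges in the Fréchet topology of $SG^{\mu\Re(z),0}(\R^n)$, so that $A^z\in L^{\mu\Re(z),0}(\R^n)$ and maps $\SX(\R^n)$ and $\SXprime(\R^n)$ continuously into themselves.

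It then remains to record the expected consistency properties: independence of $A^z$ of $\epsilon$ and of admissible deformations of the contour follows from Cauchy's theorem, the integrand being holomorphic on $\C\setminus\Lambda_\epsilon$ with the above decay; for $z=-k$, $k\in\N$, with $0\notin\sigma(A)$, a contour deformation and the resolvent identity give $A^{-k}=(A^{-1})^{k}$; and the semigroup law $A^{z_1}A^{z_2}=A^{z_1+z_2}$ for $\Re(z_1),\Re(z_2)<0$ follows from the resolvent identity and the usual manipulation of the two contours, so that $A^z:=A^{k}A^{z-k}$ (any $k\in\N$ with $k>\Re(z)$) consistently extends the definition to all of $\C$.

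The main obstacle I foresee is the first step: upgrading the merely $L^2$-bounded resolvent supplied by spectral theory to genuine membership in $L^{-\mu,0}(\R^n)$ with seminorms uniformly $O(\norm{\lambda}^{-1})$ along $\partial^+\Lambda_\epsilon$. The delicate point is that the parametrices control the symbol only for $|\xi|\ge R$, and that the residuals in \eqref{weak} lie in $L^{-\infty,0}(\R^n)$ rather than in the fully smoothing class --- they carry no decay in the $x$-variable. Tracking this carefully is precisely what will later force a \emph{regularised} trace rather than an honest one; the uniformity in $\lambda$ ultimately rests on the ellipticity estimate \eqref{2ell} being uniform in $x\in\R^n$ and $\lambda\in\Lambda$.
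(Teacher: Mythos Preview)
Your argument is correct and its core coincides with the paper's: the resolvent bound $\|(A-\lambda I)^{-1}\|_{\mathcal L(L^2)}=O(\norm{\lambda}^{-1})$ coming from \eqref{secto} (i.e.\ sectoriality), together with $|\lambda^{z}|\le C_z|\lambda|^{\Re(z)}$ on the rays, yields absolute convergence of \eqref{exp} in $\mathcal L(L^2(\R^n))$ precisely when $\Re(z)<0$. The paper's proof is exactly this, stated in two lines.

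Where you differ is only in scope, not in strategy. The Proposition asserts nothing beyond the well-definedness of the integral as a bounded operator; you additionally argue convergence in the Fr\'echet topology of $SG^{\mu\Re(z),0}(\R^n)$, hence $A^{z}\in L^{\mu\Re(z),0}(\R^n)$, as well as independence of the contour, the semigroup law, and the extension to arbitrary $z\in\C$. In the paper these facts are not part of the Proposition: the class membership is the content of Theorem~\ref{thm:az} (attributed to \cite{MSS06}), and the extension $A^{z}:=A^{z-j}\circ A^{j}$ is relegated to the Remark following the Proposition. So your write-up effectively merges the Proposition, the Remark, and a sketch of Theorem~\ref{thm:az} into one argument; nothing is wrong, but for the Proposition alone the $\mathcal L(L^2)$-convergence suffices and the symbolic analysis can be dropped.
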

\begin{proof}
By the definition of $\Lambda$-elliptic operator, we know that $(A-\lambda I)^{-1}$ exists for all
$\lambda \in \Lambda\setminus \{0\}$. Moreover, by \eqref{secto} we have that $A$ is sectorial, so the integral \eqref{exp}
converges in $\mathcal{L}(L^2(\R^n)$). 
\end{proof}

\begin{rem}
The definition of $A^z$ is then extended to arbitrary $z\in\C$ in the standard way, that is $A^z:=A^{z-j}\circ A^j$,
where $j\in\Z_+$ is chosen so that $\Re(z)-j<0$, see, e.g., \cite{BC09}, \cite{MSS06}, \cite{SE67}.
\end{rem}

\begin{Thm}
\label{thm:az}
Let $A\in L^{\mu, 0}(\R^n)$, $\mu>0$, be $\Lambda$-elliptic and satisfy Assumption \eqref{sass}. Then, $A^z \in L^{\mu z, 0}(\R^n)$.
Moreover, if $A$ is $SG$-classical then $A^z$ is still $SG$-classical\footnote{To define the class of symbols $SG^{z, w}(\R^n)$,
$z,w\in\C$, we just have to substitute $\Re(z)$ and $\Re(w)$ in place of $m,\mu$, respectively, in the estimates \eqref{controllo}.}.
\end{Thm}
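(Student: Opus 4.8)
The plan is to prove the two assertions — that $A^z \in L^{\mu z, 0}(\R^n)$ and that classicality is preserved — by a parameter-dependent symbolic calculus applied to the Cauchy integral \eqref{exp}, exactly as in the closed manifold case but keeping track of the second ($x$-)order, which here is always $0$. First I would recall that $\Lambda$-ellipticity yields, on the region $|\xi|\geq R$, a full parameter-dependent parametrix $b(x,\xi,\lambda)$ of $(a(x,\xi)-\lambda)$ constructed by the standard asymptotic Neumann-type iteration: $b_0 = (a-\lambda)^{-1}$, and the lower-order correction terms $b_{-k}$ obtained recursively from the composition formula, where $\lambda$ is treated as a covariable of weight $\mu$ in the $\xi$-direction and of weight $0$ in the $x$-direction. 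Thus each $b_{-k}$ satisfies estimates of the form
\[
|D_x^\alpha D_\xi^\beta b_{-k}(x,\xi,\lambda)| \leq C_{\alpha\beta k}\, (1+|x|)^{-|\alpha|}\, (1+|\xi|+|\lambda|^{1/\mu})^{-\mu-k-|\beta|},
\]
uniformly for $\lambda\in\Lambda$, which is precisely the estimate \eqref{secto} repackaged at the symbol level. On the bounded region $|\xi|\leq R$ one uses \eqref{secto} directly: the difference between the resolvent and the weak parametrix built from $b$ is in $L^{-\infty,0}(\R^n)$ with the stated $\lambda$-decay, so it contributes a smoothing operator to $A^z$.

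Next I would insert this parametrix into \eqref{exp} and integrate term by term along $\partial^+\Lambda_\epsilon$. For each homogeneous (in $\lambda$) component one estimates
\[
\sigma(A^z)(x,\xi) \sim \frac{1}{2\pi i}\int_{\partial^+\Lambda_\epsilon} \lambda^z\, b(x,\xi,\lambda)\, d\lambda,
\]
and the standard contour-deformation argument (rescaling $\lambda \mapsto (1+|\xi|)^\mu\lambda$ on the part $|\xi|\geq R$, and using $\mathrm{Re}(z)<0$ to guarantee absolute convergence, after possibly differentiating enough times as in the Remark to reduce to $\mathrm{Re}(z)<0$) shows that $\int \lambda^z b_{-k}\, d\lambda$ satisfies the $SG^{\mu z - k, 0}$ estimates. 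Summing asymptotically over $k$ gives $\sigma(A^z)\in SG^{\mu z, 0}(\R^n)$, hence $A^z \in L^{\mu z,0}(\R^n)$; the remainder from the bounded-$\xi$ region and from the parametrix error is smoothing. For the classical case, one observes that if $a$ is $SG$-classical then each iterate $b_{-k}$ is a sum of homogeneous-in-$(\xi,\lambda)$ and homogeneous-in-$x$ terms (rational expressions in $a_{\mu-j,\cdot}$, $a_{\cdot,m-k}$ and $(a_0 - \lambda)^{-1}$), and the $\lambda$-integral of $\lambda^z$ against a term homogeneous of degree $\mu z - j$ in $\xi$ is again homogeneous of the same degree in $\xi$; the $x$-homogeneous structure is untouched since $\lambda$ carries $x$-weight $0$. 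Hence the three families of homogeneous components $(A^z)_{\mu z - j,\cdot}$, $(A^z)_{\cdot, -k}$, $(A^z)_{\mu z - j, -k}$ required in the definition of $SG_\cl$ are produced, and $A^z\in L^{\mu z,0}_\cl(\R^n)$.

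The main obstacle is the careful treatment of the $x$-order $0$: unlike the $\xi$-direction, where $\lambda$ behaves as an order-$\mu$ parameter and injects decay, in the $x$-direction $\lambda$ is neutral, so the parameter-dependent estimates do not improve the $x$-decay, and one must verify that the $x$-order stays exactly at $0$ through all compositions and through the contour integral — in particular that no spurious growth in $x$ is generated when $|\xi|$ is bounded and $\lambda\to\infty$ along the sector. This is where Assumption \eqref{sass} and the second line of \eqref{secto} are essential: they ensure the resolvent has the $\lambda^{-2}$ decay globally in $x$ near the bounded-$\xi$ region, so that the $\lambda$-integral converges and contributes only an $L^{-\infty,0}$ term. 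A secondary technical point is justifying the asymptotic summation $\sum_k$ and the exchange of $\sum$ with $\int_{\partial^+\Lambda_\epsilon}$, which is handled by a Borel-type argument together with the uniform-in-$\lambda$ symbol bounds; and the reduction to $\mathrm{Re}(z)<0$ via $A^z = A^{z-j}\circ A^j$ must be checked to be compatible with the order count, which is immediate since $A^j\in L^{\mu j,0}$ and the algebra is graded.
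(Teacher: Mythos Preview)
Your sketch is essentially correct and follows the standard route: build a parameter-dependent parametrix for $(A-\lambda I)$ with $\lambda$ treated as a covariable of weight $\mu$ in $\xi$ and $0$ in $x$, plug it into the Dunford integral \eqref{exp}, and read off the $SG^{\mu z,0}$ estimates term by term, then check that the $x$- and $\xi$-homogeneous structure survives the $\lambda$-integration in the classical case. This is precisely the argument carried out in \cite{MSS06}, to which the paper simply defers for the proof; the paper itself does not give an independent argument beyond the Remark preceding the citation, which already indicates that one works modulo $L^{-\infty,0}(\R^n)$ via the weak parametrix \eqref{weak}. So your proposal and the paper's (cited) proof coincide in strategy; the only difference is that you spell out the mechanism, while the paper outsources it.
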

\begin{rem}
In order to define the symbol of $A^z$, the resolvent $(A-\lambda I)^{-1}$ can be approximated with the weak parametrix
$B(\lambda)$ defined in \eqref{weak}. In this way, a symbol for $A^z$ can be computed, modulo smoothing operators w.r.t. the $\xi$-variable. $A^z$ can then be considered as an element of the algebra $\mathscr{A}$ given by
\begin{equation}
\label{algregxi}
\mathscr{A}:=\bigcup_{\mu \in \Z} L^{\mu,0}(\R^n)/ L^{-\infty,0}(\R^n).
\end{equation}
\end{rem}
\noindent
The proof of the Theorem \ref{thm:az} has been given in \cite{MSS06}.

From here on, $dx$ will denote the measure induced on $\R^n$ by a smooth Riemannian metric
$g=(g_{jk})$. In order to obtain a result similar to \eqref{KKWTHM} we have to impose some condition on $g$, namely\footnote{In the $\flat$-calculus setting, this condition implies that the underlying metric is polyhomogeneous: this is used, for instance, in \cite{AlMa10}.}
\setcounter{tmpeq}{\value{equation}}
\renewcommand{\theequation}{A\arabic{equation}}
\setcounter{equation}{1}
\begin{equation}
\label{assu}
\begin{array}{rl}
 & \text{$g$ is a matrix-valued $SG$-classical symbol of order $(0,0)$. }
\end{array}
\end{equation}
\renewcommand{\theequation}{\thesection.\arabic{equation}}
\setcounter{equation}{\value{tmpeq}}
If $A\in L^{\mu, m}(\R^n)$ is trace class, that is $\mu<-n, m<-n$, we can define its trace
\[
TR(A):=\int K_A(x,x)\,dx,
\]
where $K_A(x,x)$ is the kernel of $A$ restricted to the diagonal. The concept of regularised trace, valid for classical
$SG$-operators under less restrictive hypotheses on the order, has been introduced in \cite{MSS06b}, using the finite
part integral defined in the previous Section:
\begin{Def}
Let $A \in L^{\mu, m}_\cl(\R^n)$ be such that $\mu <-n$. We define the regularised trace of $A$ as
\begin{equation}
\label{traccia}
\widehat{TR}(A):= \fint K_{A}(x,x)\,dx.
\end{equation}
\end{Def}
\begin{rem}
Note that the condition $\mu<-n$ implies that $K_A(x,x)$ is indeed a function and that the finite part integral \eqref{traccia} is well defined.  
\end{rem}

\noindent
Now, using the regularised integral, we can give the definition of regularised $\zeta$-function:

\begin{Def}
Let $A \in L_{\cl}^{\mu, 0}(\R^n)$, $\mu>0$, be a $\Lambda$-elliptic operator that satisfies \eqref{sass}; then we define
\begin{equation}
\label{fzeta}
\hat{\zeta}(A, z):= \widehat{TR}(A^{-z})= \fint K_{A^{-z}}(x,x)\,dx, \quad \Re(z)>\frac{n}{\mu},
\end{equation}
where $K_{A^{-z}}(x,x)$ is the kernel of the operator $A^z$.
\end{Def}
\noindent It is simple to prove that $\hat{\zeta}(A,z)$ is holomorphic for $\Re(z)>\frac{n}{\mu}$, in view of the fact that
the hypotheses imply that the kernel $K_{A^z}(x,x)$ is a function.
As in the case treated in \cite{SE67}, we can look for meromorphic extensions of $\hat{\zeta}(A,z)$.
\begin{Thm} 
Let $A \in L_\cl^{\mu, 0}(\R^n)$, $\mu>0$, be a $SG$-operator that admits complex powers. Then the function $\hat{\zeta}(A,z)$ can be extended as a meromorphic function with, at most, poles at the points $z_j=\frac{n-j}{\mu}$, $j \in \N$.
\end{Thm}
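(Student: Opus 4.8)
The plan is to reduce the statement to a one‑dimensional meromorphic continuation of Seeley type \cite{SE67}, carried out on the symbol side of the $SG$‑calculus. Since $A$ admits complex powers, Theorem \ref{thm:az} gives $A^{-z}\in L_\cl^{-\mu z,\,0}(\R^n)$ for every $z\in\C$, and a full symbol $a^{(-z)}$ of $A^{-z}$, well defined modulo $SG^{-\infty,0}(\R^n)$, is produced from the weak parametrix of $A-\lambda I$ via the contour integral \eqref{exp}. This construction yields a family which is \emph{holomorphic} in $z$, with $SG$‑seminorms locally bounded in $z$, and whose homogeneous components in $\xi$ have $z$‑dependent degree: fixing a $0$‑excision function $\chi=\chi(\xi)$ with $\chi\equiv0$ for $|\xi|\le\tfrac12$ and $\chi\equiv1$ for $|\xi|\ge1$, one has for every $N\in\N$
\begin{equation*}
a^{(-z)}(x,\xi)-\sum_{j=0}^{N-1}\chi(\xi)\,q_j(x,\xi;z)\in SG^{-\mu\Re(z)-N,\,0}(\R^n),
\end{equation*}
where each $q_j(\,\cdot\,,\,\cdot\,;z)$ is $SG$‑classical of order $0$ in $x$, holomorphic in $z$, and positively homogeneous of degree $-\mu z-j$ in $\xi$ for $|\xi|\ge1$, while the remainder also depends holomorphically on $z$ with locally uniform bounds.

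First I would fix $z$ with $\Re(z)>n/\mu$ and set $h(x,z):=(2\pi)^{-n}\int_{\R^n}a^{(-z)}(x,\xi)\,d\xi$, so that $\hat\zeta(A,z)=\widehat{TR}(A^{-z})=\fint h(x,z)\,dx$ (by \eqref{assu} the Riemannian density is itself an $SG$‑classical symbol of order $0$ in $x$, so carrying it along is harmless). Since $a^{(-z)}$ has order $0$ in $x$, differentiating under the integral sign shows $h(\,\cdot\,,z)\in S^0_\cl(\R^n)$, its homogeneous components in $x$ being the $\xi$‑integrals of those of $a^{(-z)}$, so the finite part integral is meaningful. Writing $a^{(-z)}=\sum_{j=0}^{N-1}\chi q_j+r_N$ and integrating in $\xi$: the term $\int_{\R^n}r_N\,d\xi$ is holomorphic for $\Re(z)>(n-N)/\mu$; and for each $j$, $\int_{\R^n}\chi q_j\,d\xi$ equals an integral over the bounded set $\tfrac12\le|\xi|\le1$, which is entire in $z$, plus
\begin{equation*}
\int_{|\xi|\ge1}q_j(x,\xi;z)\,d\xi=\frac{1}{\mu z+j-n}\,\gamma_j(x;z),\qquad \gamma_j(x;z):=\int_{|\xi|=1}q_j(x,\xi;z)\,dS(\xi),
\end{equation*}
which holds a priori for $\Re(\mu z+j-n)>0$ but whose right‑hand side is meromorphic on $\C$, holomorphic apart from at most a simple pole at $z_j:=(n-j)/\mu$, with $\gamma_j(\,\cdot\,;z)\in S^0_\cl(\R^n)$ holomorphic in $z$. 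Letting $N\to\infty$ exhibits $z\mapsto h(\,\cdot\,,z)$ as a meromorphic family of $SG$‑classical symbols of order $0$ in $x$, with poles only at the $z_j$, $j\in\N$; since the $x$‑order equals $0$ for every $z$, no further pole arises on the $x$‑side. Note that at $z_j$ one has $-\mu z_j-j=-n$, so $\gamma_j(\,\cdot\,;z_j)$ is the integral over $|\xi|=1$ of the order‑$(-n)$‑in‑$\xi$ component of the symbol of $A^{-z_j}$, i.e.\ exactly the object underlying the generalised Wodzicki residue of Section \ref{sec:3}.

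Finally I would apply $\fint(\,\cdot\,)\,dx$ term by term. The point is that, on $S^0_\cl(\R^n)$ equipped with its natural Fr\'echet topology (the $SG$ seminorms together with seminorms controlling the homogeneous components and the remainders), the finite part integral is a \emph{continuous} linear functional; composing it with the holomorphic, resp.\ meromorphic, family $z\mapsto h(\,\cdot\,,z)$ therefore preserves holomorphy, resp.\ meromorphy with the same poles. Hence $\hat\zeta(A,z)=\fint h(x,z)\,dx$ extends meromorphically to $\C$ with poles contained in $\{\,z_j=\tfrac{n-j}{\mu}:j\in\N\,\}$, the residue at $z_j$ being $(2\pi)^{-n}\mu^{-1}\fint\gamma_j(x;z_j)\,dx$.

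The routine parts are the convergence of \eqref{exp}, the $SG$‑estimates, the differentiation under the integral sign, and the homogeneity computation. The substantive work is twofold: (i) reading off from the complex‑power construction (as in \cite{SE67}, \cite{MSS06}, and the proof of Theorem \ref{thm:az}) that the $\xi$‑homogeneous symbol components $q_j(x,\xi;z)$ \emph{and} the remainders depend holomorphically on $z$ with locally uniform $SG$‑bounds; and (ii) checking that $\fint$ is continuous on $S^0_\cl(\R^n)$, so that it may be evaluated on a holomorphic symbol‑valued function term by term. I expect (i) to be the main obstacle, although it is essentially already contained in the Seeley‑type analysis behind Theorem \ref{thm:az}; (ii) follows by inspection of the definition in Section \ref{sec:1}.
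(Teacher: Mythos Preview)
Your argument is correct. You run a direct Seeley--type continuation: expand the symbol of $A^{-z}$ into $\xi$--homogeneous pieces $q_j$ of degree $-\mu z-j$ depending holomorphically on $z$, integrate each over $\{|\xi|\ge1\}$ to produce the explicit factors $(\mu z+j-n)^{-1}$, and then apply $\fint(\cdot)\,dx$, which---being a continuous linear functional on $S^0_\cl(\R^n)$---carries the meromorphic symbol--valued family to a meromorphic scalar function without creating new poles, since the $x$--order is fixed at $0$ throughout. Your identification of the two genuine inputs, namely (i) holomorphic dependence of the $q_j$ and of the remainders with uniform $SG$--bounds, and (ii) continuity of $\fint$ on $S^0_\cl(\R^n)$, is accurate; (i) is exactly the content of the complex--power construction behind Theorem~\ref{thm:az} and \cite{SE67,MSS06}, and (ii) follows from the definition in Section~\ref{sec:1}.

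The paper, however, does not actually supply a proof of this statement: it is asserted without argument, the analysis being delegated to \cite{MSS06,MSS06b}. The related and sharper result that the paper \emph{does} prove---the joint expansions \eqref{expzeta}--\eqref{heatas} in Section~\ref{sec:3}---is obtained by a different route, namely the abstract Grubb--Seeley correspondence \cite{GS96} between heat--trace asymptotics and $\zeta$--function singularities, after adapting \cite{MSS06b}. So your approach is genuinely different and more elementary: it stays entirely on the symbol side, gives the pole locations by an explicit one--line homogeneity computation, and, as you observe, produces along the way the density $\gamma_j(\,\cdot\,;z_j)$ underlying the generalised Wodzicki residue \eqref{regres}. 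The heat--kernel route, on the other hand, handles the meromorphic continuation and the small--$t$ asymptotics of $\widehat{TR}(e^{-tA})$ simultaneously, and it is the latter that the paper actually exploits in Section~\ref{sec:3} to read off $c_{2,0}(\Delta)$ and reach \eqref{KKW}.
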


Following the idea of M. Wodzicki \cite{WO84}, \cite{KA89}, we can now introduce a regularised version of non-commutative residue.
\begin{Def}
Let $A \in L_{\cl}^{\mu, 0}(\R^n)$, $\mu>0$,  be a $\Lambda$-elliptic operator that satisfies \eqref{sass}. We define the regularised Wodzicki residue of $A$ as 
\[
\widehat{\wres}(A):= \mu \, \res_{z=-1} \hat{\zeta}(A,z).
\]
\end{Def}
\noindent In the case $\mu \in \N$, using the explicit expression of the regularised integral and of the residues of $\hat{\zeta}(A,z)$, we get
\begin{equation}
\label{regres}
\widehat{\wres}(A)\!=\! \frac {1}{(2\pi)^{n}}\!\lim_{\rho \to \infty} \left[ \int_{|x|\leq \rho}\!\int_{|\xi|=1} \!\!a_{-n, \cdot}(x,\xi) \, dS(\xi)dx\!- 
\!\!\!\sum_{j=0}^{\mu+n-1} \!\!\frac{\beta_j}{n-j}\rho^{n-j}\! -\!\beta_{\mu+n} \log \rho\right]
\end{equation}
where 
\[
\beta_{j}= \int_{|x|=1} \int_{|\xi|=1} a_{n-j,\cdot}\, dS(\xi) \widetilde{dS}(x),
\] 
$\widetilde{dS}(x)$ the metric induced by $g$ on $|x|=1$.
The case $\mu \not \in \Z$ is not very interesting, since then $\widehat{\wres}(A)$ always vanishes, due to the fact that, in this case, the kernel $K_{A^{-z}}(x,x)$ has no poles at $z=-1$.
The residue $\widehat{\wres}(\cdot)$ also vanishes on smoothing operators w.r.t. the $\xi$-variable, so it is well defined on the
algebra $\mathscr{A}$. Incidentally,
let us notice that the expression \eqref{regres} is analogous to the functional
$\res_\psi(A)$ defined by F. Nicola in \cite{NI03}, by means of holomorphic operator families.

\section{A Kastler-Kalau-Walze type Theorem on $\R^n$}
\label{sec:3}
First, we restrict to the case of $\R^4$ and consider the classical Atiyah-Singer Dirac operator $\Dir$ acting on the spinor bundle $\Sigma \R^4$.
 If the metric on $\R^4$ satisfies Assumption \eqref{assu}, it is immediate to verify that $\Dir \in L^{1,0}_\cl$. Let $\Dir^{-2}$ denote a \emph{weak} parametrix of the square of the Dirac operator, that is $\Dir^2 \circ \Dir^{-2}= I + R$, $R \in L^{-\infty, 0}$. The calculus implies that $\Dir^{-2} \in L_\cl^{-2, 0}$.
Via direct computation, following the idea of D. Kastler \cite{KA95}, it is possible to compute $a_{-4,.}(x, \xi)$, the term of of order $-4$ in the asymptotic expansion w.r.t. the $\xi$-variable of the symbol of $\Dir^{-2}$.
Evaluating the integral on the sphere w.r.t. the $\xi$ variable one gets
\[
\int_{|\xi|=1}a_{-4,.}(x, \xi)\,dS(\xi) = -\frac{1}{24 \pi^2} s(x).
\] 
So we have that 
\begin{equation}
\label{KKWtesi}
\widehat{\wres}(\Dir^{-2})=- \frac{1}{24 \pi^2}\fint s(x)\,dx.
\end{equation}
The proof of \eqref{KKWtesi} is contained in \cite{BTh}. Let us notice the slight abuse of notation in \eqref{KKWtesi},
due to the fact that, in general, $\Dir^{-2}$ does not satisfy Assumption \eqref{sass}: anyway, we can use \eqref{regres}
as a definition of $\widehat{\wres}(\Dir^{-2})$ in this case.

In order to obtain a generalisation of \eqref{KKWtesi} to higher dimensions and to more general operators, the direct approach seems to be rather cumbersome. For this reason, we follow an idea of T. Ackermann \cite{AC96} and exploit the properties of the asymptotic expansion of the heat kernel of generalised Laplacians.

As explained in the previous Section, if $A \in L_\cl^{\mu, 0}(\R^n)$, $\mu>0$, is $\Lambda$-elliptic and satisfies Assumption \eqref{sass}, we can define the complex powers of $A$ and the heat semigroup $e^{-tA}$ as well:
\[
e^{-tA}:= \frac{i}{2\pi }\int_{\partial^+ \Lambda_\epsilon} e^{-t\lambda}(A-\lambda I)^{-1} d\lambda.
\]
In \cite{MSS06b} it has been proved that $e^{-tA}$ is a $SG$-operator belonging to $L^{-\infty, 0}(\R^n)$, so we can also consider the regularised heat trace $\widehat{TR}(e^{-tA})$. There is a deep link between regularised heat trace and $\hat{\zeta}$-function:
\begin{Thm}
Let $A \in L_\cl^{\mu, 0}(\R^n)$, $\mu>0$, be an operator that admits complex powers. Then, for suitable constants
$c_{kl}=c_{kl}(A)$, the following two asymptotic expansions hold:

\begin{align}
\label{expzeta}
\Gamma(z)\,\hat{\zeta}(A,z) &\sim \sum_{k=0}^{\infty}\sum_{l=0}^1c_{kl}\left(z- \frac{n-k}{\mu}\right)^{-l+1}
\\
\label{heatas}
\widehat{TR}(e^{-tA}) &\sim \sum_{k=0}^{\infty}\sum_{l=0}^1 (-1)^l c_{kl}t^{-\frac{n-k}{\mu}}log^l t, \quad t\searrow 0.
\end{align}

\end{Thm}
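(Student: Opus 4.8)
\emph{Strategy.} The two expansions are linked by the Mellin transform. Since $A$ is sectorial (cf. \eqref{secto}), one has, at the operator level, $A^{-z}=\Gamma(z)^{-1}\int_0^\infty t^{z-1}e^{-tA}\,dt$ for $\Re(z)$ large, hence, granting the exchange of $\widehat{TR}$ with the $t$-integration,
\[
\Gamma(z)\,\hat\zeta(A,z)=\int_0^\infty t^{z-1}\,\widehat{TR}(e^{-tA})\,dt .
\]
The plan is to prove \eqref{heatas} first, directly from the $SG$-calculus (this is where the constants $c_{kl}(A)$ get defined), and then to deduce \eqref{expzeta} from the identity above, adapting to the present setting the heat-kernel argument of T. Ackermann \cite{AC96} and the Seeley-type analysis of \cite{MSS06}, \cite{MSS06b}.

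\emph{Step 1: the heat-trace expansion.} Using the contour representation $e^{-tA}=\tfrac{i}{2\pi}\int_{\partial^+\Lambda_\epsilon}e^{-t\lambda}(A-\lambda I)^{-1}\,d\lambda$ and the weak parametrix $B(\lambda)$ of $A-\lambda I$ from \eqref{weak}--\eqref{secto}, I would build a symbol for $e^{-tA}$ modulo $L^{-\infty,0}(\R^n)$, namely $q(t,x,\xi)=\tfrac{i}{2\pi}\int_{\partial^+\Lambda_\epsilon}e^{-t\lambda}\,b(x,\xi;\lambda)\,d\lambda$, where $b\sim\sum_j b_j$ is the $\lambda$-dependent parametrix symbol of $A-\lambda I$, with $b_j$ quasi-homogeneous of degree $-\mu-j$ (with $\lambda$ counted of order $\mu$) and $SG$-classical of order $0$ in $x$. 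The diagonal kernel is $K_{e^{-tA}}(t,x,x)=(2\pi)^{-n}\int q(t,x,\xi)\,d\xi$; the rescaling $\xi=t^{-1/\mu}\eta$, together with the substitution $\tau=t\lambda$ in the $\lambda$-contour, turns the contribution of $b_j$ into $t^{(j-n)/\mu}$ times an $SG$-classical symbol of order $\le 0$ in $x$. Summing over $j$ and applying $\fint(\cdot)\sqrt{\det g(x)}\,dx$ --- which is meaningful because Assumption \eqref{assu} makes $\sqrt{\det g}$ an $SG$-classical factor of order $0$ --- one obtains an expansion in powers $t^{-(n-k)/\mu}$; a factor $\log t$ enters precisely for those $k$ at which the exponent $-(n-k)/\mu$ is a non-positive integer, i.e. where the $\log$-terms inherent to the symbol of the complex power $A^{z}$ come into play, and this accounts for the index $l\in\{0,1\}$. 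It then remains to show that the $\fint$ of the $\xi$- and $x$-remainders is $O(t^N)$ for every $N$, uniformly for $t\searrow 0$.

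\emph{Step 2: from the heat trace to the $\zeta$-function.} Split $\int_0^\infty=\int_0^1+\int_1^\infty$ in the identity above. The integral over $[1,\infty)$ defines a function holomorphic for $\Re(z)<0$ that continues meromorphically, its only singularity --- a simple pole at $z=0$ --- coming, when $0\in\sigma(A)$, from the finite-rank smoothing projection onto $\ker A$ and affecting only the $k=n$ term. Into $\int_0^1 t^{z-1}\widehat{TR}(e^{-tA})\,dt$ one inserts \eqref{heatas} and uses $\int_0^1 t^{z-1-\alpha}(\log t)^l\,dt=(-1)^l\,l!\,(z-\alpha)^{-l-1}$ with $\alpha=\tfrac{n-k}{\mu}$; since $l!=1$ for $l\in\{0,1\}$ and the factor $(-1)^l$ of \eqref{heatas} cancels the $(-1)^l$ produced by the integral, one recovers precisely the right-hand side of \eqref{expzeta} with the \emph{same} constants $c_{kl}$, each pole $z=\tfrac{n-k}{\mu}$ being of order $l+1\le2$; this also re-derives the location of the poles at $z_j=\tfrac{n-j}{\mu}$.

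\emph{Main obstacle.} I expect the hard part to be Step 1: constructing the $\lambda$-dependent parametrix symbol $b(x,\xi;\lambda)$ and, from it, the symbol $q(t,x,\xi)$ of $e^{-tA}$ with \emph{quantitative} control inside the $SG$-calculus, carrying out the $\xi$- and $\lambda$-integrations under the scaling $\xi\mapsto t^{-1/\mu}\eta$, and --- above all --- proving that the finite-part integral of the \emph{remainder} symbols (not merely of the homogeneous model terms) is genuinely $O(t^N)$. This forces one to control the interchange of the limit $\rho\to\infty$ defining $\fint$ with the $t$-integral, and to track throughout the resonances responsible for the $\log^l t$ terms; once this is in place, Step 2 reduces to the bookkeeping of Mellin integrals indicated above.
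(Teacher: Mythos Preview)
Your outline is sound, but the route differs from the paper's in a way worth noting. The paper's proof is essentially two citations: it adapts the arguments of \cite{MSS06b} (where the meromorphic structure of $\hat\zeta(A,z)$ for classical $SG$-operators is worked out via complex powers) and then invokes the abstract equivalence theorem of Grubb and Seeley \cite{GS96}, which states, under general hypotheses, that a heat-trace expansion of the form \eqref{heatas} holds if and only if the associated Mellin transform $\Gamma(z)\hat\zeta(A,z)$ has the singularity structure \eqref{expzeta}, with the \emph{same} coefficients $c_{kl}$. Thus the paper establishes one side (the $\zeta$-side, from \cite{MSS06b}) and obtains the other for free.

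Your plan reverses the order and replaces the abstract bridge by an explicit one: you build the heat expansion directly from the $\lambda$-parametrix and the scaling $\xi\mapsto t^{-1/\mu}\eta$, and then run the Mellin transform by hand in Step~2. This is exactly what the Grubb--Seeley theorem encodes, so Step~2 is really a special case of \cite{GS96}; nothing is lost, but you are redoing work that the paper outsources. What your approach buys is self-containment and an explicit identification of each $c_{kl}$ with a concrete symbol integral; what the paper's approach buys is brevity and robustness (the Grubb--Seeley machinery handles the remainder estimates and the $\log$-bookkeeping once and for all).

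One point in Step~1 deserves sharpening. Your sentence ``a factor $\log t$ enters precisely for those $k$ at which the exponent $-(n-k)/\mu$ is a non-positive integer'' is not quite the right mechanism here. In the $SG$-setting with $x$-order $0$, the logarithmic terms do not arise from integer resonances in the $\xi$-scaling alone; they come from the interaction between the $\xi$-expansion of the parametrix and the finite-part integral in $x$ (the $\log\rho$ subtraction in the definition of $\fint$, applied to the $x$-classical symbols produced after the $\xi$-integration). If you pursue the direct route, this is where you must be careful: the exchange of the $\rho\to\infty$ regularisation with the $t$-asymptotics is exactly what produces the $l=1$ terms, and it is also where the uniform remainder bound you flag as the ``main obstacle'' has to be proved. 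This is handled in \cite{MSS06b}, which is why the paper simply cites it.
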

\begin{proof}
The statement follows by adapting the arguments given in \cite{MSS06b}
to the present situation. A main role in the proof is played by an abstract theorem by G. Grubb and R. Seeley \cite{GS96}, connecting $\zeta$-functions and heat traces.
\end{proof}

Let us now consider a generalised positive Laplacian $\Delta\in L^{-2, 0}_\cl(E)$, where $E$ is a Hermitian vector bundle on
$\R^n$ with connection $\nabla$, that is
\[
\Delta= \nabla^* \nabla+ \mathscr{K}, \quad \mathscr{K}\in C^{\infty}(\textnormal{End}(E)) \mbox { symmetric endomorphism field}.
\]
We require that $\Delta$ satisfies Assumption \eqref{sass}: in this way, we can define $e^{-t\Delta}$ as above. In the case of closed manifolds, it is well known (see, e.g., \cite{AB02}) that
\begin{equation}
\label{kernelas}
K_{e^{-t \Delta}}(x,x)=k_t(x,x) \sim (4\pi t)^{-\frac{n}{2}}\left[ 1\cdot \textnormal{Id}_{E} + \big(\frac{1}{6}s(x) \textnormal{Id}_E - \mathscr{K}_x\big)t+ O(t^2)\right], \quad t\searrow 0.
\end{equation}
where $s(x)$ is the scalar curvature of the underlying manifold and the remainder term depends only on the connection and on the endomorphism field. The asymptotic expansion \eqref{kernelas} also holds in the case of manifolds with cylindrical ends, since the computations are completely analogous and purely local, see \cite{AL07}. The evaluation of the first term of the asymptotic expansion can be found in \cite{MP02}: the expression of the second term then follows, using the properties of generalised Laplacians.
In view of our hypotheses, the right hand side of \eqref{kernelas} is a classical $SG$-symbol: then, we obtain
\begin{equation}
\begin{split}
\label{trheat}
\widehat{TR}&(e^{-t\Delta}) \sim \\
&\hspace{1mm}
(4 \pi t)^{-\frac{n}{2}}\left\{ \fint \textnormal{Rk}(E)\,dx + t\!\fint\left[\frac{\textnormal{Rk}(E)}{6}s(x) -\textnormal{Trace}( \mathscr{K}_x)\right]
dx+ O(t^2) \right\}, \; t\searrow 0.
\end{split}
\end{equation}
Since, trivially, when $h$ is a meromorphic function with a simple pole in $z_0$, the function $\tilde{h}(z)=h(c z)$, $c \in \R$, is a meromorphic function with a simple pole in $\frac{z_0}{c}$ and
\[
\res_{w=\frac{z_0}{c}} \tilde{h}= \frac{1}{c}\res_{z=z_0}h,
\] 
we also have that
\begin{equation}
\label{rag}
\begin{split} 
\widehat{\wres}(\Delta^{-\frac{n}{2}+1})&=(2-n)\, \res_{z=-1} \hat{\zeta}(\Delta^{-\frac{n}{2}+1}, z)=2 \, \res_{z=\frac{n-2}{2}} \hat{\zeta}(\Delta,z)
\\
&= 2 \,\Gamma \left(\frac{n-2}{2}\right)^{-1}c_{2,0}(\Delta),
\end{split}
\end{equation}
where  $c_{2,0}(\Delta)$ is coefficient of the term of order $t^{-\frac{n-2}{2}}$ in the asymptotic expansion \eqref{heatas}.
Finally, by \eqref{trheat} and the properties of $\Gamma(z)$,
\begin{equation}
\label{KKW}
\widehat{\wres}(\Delta^{-\frac{n}{2}+1})= \frac{n-2 }{\Gamma(\frac{n}{2}) (4 \pi)^{\frac{n}{2}}} \fint \left[ \frac{\textnormal{Rk}(E)}{6}s(x)- \textnormal{Trace}(\mathscr{K}_x) \right]dx.
\end{equation}
\begin{rem}
Assumption \eqref{sass} does not imply that $\Delta$ is invertible, since we allow the origin to be an isolated point of
$\sigma(\Delta)$. In view of this, the operator $\Delta^{-\frac{n}{2}+1}$ has to be interpreted in the sense of the complex powers defined above.
\end{rem}

If we consider  a generalised Laplacian $\Delta$, then its principal homogeneous symbol is $g^{jk}(x) \xi_j\xi_k=|\xi|^2>0$, $\xi\not=0$.
$\Delta$ turns out to be
always $\Lambda$-elliptic with respect to a suitable sector of the complex plane, while $\sigma(\Delta)$ can admit the origin as an accumulation point. For example, it is well known that the classical Atiyah-Singer Dirac operator on $\R^n$, endowed with the canonical Euclidean metric, has no point spectrum, but the essential spectrum is the whole real line. In this case Assumption \eqref{sass} of course fails to be true\footnote{For further properties of the Dirac spectrum on open manifolds, the reader can refer, for instance, to the monograph by N. Ginoux \cite{GI09}.}.
A simple example such that Assumption \eqref{sass} is satisfied can be built in the following way. Let us consider a general Dirac operator $D$, defined on a Clifford bundle $E$ over $\R^n$: $D^2$ is then a generalised Laplacian and a non-negative operator. If we consider $D^2_\epsilon= D^2+\epsilon I$, we obtain an invertible generalised Laplacian, that clearly satisfies  \eqref{sass}. If we consider the classical Atyiah-Singer Dirac operator $\Dir$, formula \eqref{KKW} turns to
\begin{equation}
\widehat{\wres}((\Dir^2_\epsilon)^{-\frac{n}{2}+1})=\frac{(n-2) 2^{[ \frac{n}{2}]}}{\Gamma(\frac{n}{2}) (4 \pi)^\frac{n}{2}}  \left( - \frac{1}{12} \fint s(x) \,dx
-\epsilon\fint  dx\right).
\end{equation}

On the other hand, a natural example of a metric on $\R^n$ which can satisfy Assumption \eqref{assu} is an asymptotically flat one. In General Relativity, such an hypothesis on the metric is commonly assumed (e.g., in order to define the ADM-mass).
Explicitly, we can consider a metric $g$ such that, for a constant $\alpha>0$,
\[
g_{jk}(x)- \delta_{jk} = O(|x|^{-\alpha}) \mbox{ outside a compact set } K\subset\R^n.
\]
Moreover, restricting ourself to $\R^4$, if $\alpha >2 $ the scalar curvature $s(x)$ is integrable: in this case, \eqref{KKWtesi} becomes
\[
\widehat{\wres}(\Dir^{-2})= -\frac{1}{24 \pi^2} \int s(x)\,dx.
\] 

The method above can be used to treat also the case of manifolds with cylindrical ends, using the contents of \cite{BC09}: one defines in this setting a regularised Wodzicki Residue and exploits its connection with the zeta function. The asymptotic expansion of the heat kernel as $t \searrow 0$ is locally defined, so, using suitable regularised integrals, see \cite{BC09}, the results can be generalised to those manifolds in this class which admit a spin structure. To keep this exposition at a reasonable length, we omit here the details. 

\bibliographystyle{abbrv}

\def\cftil#1{\ifmmode\setbox7\hbox{$\accent"5E#1$}\else
  \setbox7\hbox{\accent"5E#1}\penalty 10000\relax\fi\raise 1\ht7
  \hbox{\lower1.15ex\hbox to 1\wd7{\hss\accent"7E\hss}}\penalty 10000
  \hskip-1\wd7\penalty 10000\box7}

\end{document}